\theoremstyle{plain}
\newtheorem{theorem}{Theorem}[section]
\newtheorem{lemma}[theorem]{Lemma}
\theoremstyle{definition}
\newtheorem{remark}[theorem]{Remark}
\numberwithin{equation}{section}
\def\R{\mathbf R}
\def\N{\mathbf N}
\def\S{\Sigma}
\def\epsilon{\varepsilon}
\def\div{\mathrm{div}}
\def\({\left(}
\def\){\right)}
\begin{document}

\title[Liouville Type Theorems]{Liouville Type Theorems for Minimal Surface Equations in Half Space}

\author{Guosheng Jiang}
\address{Beijing International Center for Mathematical Research, Peking University, Beijing 100871, China}
\email{gsjiang@pku.edu.cn}
\author{Zhehui Wang}
\address{Beijing International Center for Mathematical Research, Peking University, Beijing 100871, China}
\email{wangzhehui@pku.edu.cn}
\author{Jintian Zhu}
\address{Key Laboratory of Pure and Applied Mathematics, School of Mathematical Sciences, Peking University, Beijing 100871, China}
\email{zhujt@pku.edu.cn}

\begin{abstract}
For $n\geq2,$ we obtain Liouville type theorems for minimal surface equations in half space $\R^n_+$ with affine Dirichlet boundary value or constant Neumann boundary value.
\end{abstract}

\maketitle

\section{Introduction}

Liouville type theorems for several kinds of nonlinear elliptic equations in half space have already been extensively studied. For the semilinear elliptic equation $-\Delta u=|u|^{p-2}u$ in $\R_+^n$ with zero-Dirichlet boundary condition when $n\geq3$ and $2<p<2n/(n-2),$ Gidas and Spruck \cite{GiSp} proved that $u=0$ is the unique non-negative solution. For the real Monge-Amp\`{e}re equation, it is well known in Savin \cite{Sa} and  Mooney \cite{Mo} that any convex viscosity solution of $\det\nabla^2u=1$ in $\R_+^n$ with quadratic boundary condition must be a quadratic polynomial if $u=O(|x|^2)$ as $|x|\to\infty$. For minimal surface system prescribed with an affine Dirichlet boundary condition, Ding, Jost and Xin proved in \cite{DiJoXi} that any $C^2(\R_+^n)\cap C^{1,\alpha}(\overline{\R^n_+})$ solution with small singular values and uniformly bounded gradient must be an affine function, whose one-codimensional case indicates the validity of a Liouville type theorem for minimal graph over half space $\R^n_+$. Indeed, we can establish the following Liouville type theorem:

\begin{theorem}\label{ld}
Let $n\geq 2$ be an integer and $u\in C^2(\R_+^n)\cap C(\overline{\R_+^n})$ be a solution of
\begin{align}
\div\(\frac{\nabla u}{\sqrt{1+|\nabla u|^2}}\)&=0\,\,\, \hbox{in $\R_+^n$,}\label{mse}\\
u&=l\,\,\, \hbox{on $\partial\R_+^n,$}\label{diri}
\end{align}
where $l:\R^n\to\R$ is an affine function.
Assume that $u:\overline{\R_+^n}\to\R$ has at most a linear growth, which means there exists a constant $K>0$ such that
\begin{equation}\label{lingro}
|u(x)|\leq K(|x|+1)\,\,\, \hbox{for any $x\in\overline{\R_+^n}$.}
\end{equation}
Then $u$ is an affine function.
\end{theorem}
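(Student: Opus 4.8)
The plan has three stages: first I would use the affine Dirichlet condition together with the linear growth to produce a \emph{uniform} gradient bound $\sup_{\overline{\R^n_+}}|\nabla u|\le C$; then I would show the tangential derivatives of $u$ are constant; and finally I would integrate the one remaining ordinary differential equation. For the gradient bound, write $l(x)=a'\cdot x'+a_nx_n+b$ with $x=(x',x_n)\in\R^{n-1}\times[0,\infty)$. Fix $x_0\in\R^n_+$ with $|x_0|\ge 1$, set $\bar x_0:=((x_0)',0)\in\partial\R^n_+$, and rescale the half-ball $B_{2|x_0|}(\bar x_0)\cap\overline{\R^n_+}$ to unit size by
\[
v(z):=\frac{1}{2|x_0|}\,u\big(\bar x_0+2|x_0|\,z\big),\qquad z\in B_1(0)\cap\overline{\R^n_+}.
\]
Because \eqref{mse} is invariant under dilations $u\mapsto u(R\,\cdot)/R$ and under translations along $\partial\R^n_+$, $v$ again solves \eqref{mse}; its Dirichlet data on $\{z_n=0\}$ is affine \emph{with the same tangential gradient} $a'$; and $|z|\le 1$, $|\bar x_0|\le|x_0|$, $|x_0|\ge1$ together with \eqref{lingro} give $\sup_{B_1(0)\cap\overline{\R^n_+}}|v|\le 2K$. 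The classical a priori gradient estimate for the minimal surface equation on $B_1(0)\cap\overline{\R^n_+}$ — the interior estimate of Bombieri--De Giorgi--Miranda away from the flat face $\{z_n=0\}$, combined with a boundary gradient estimate near $\{z_n=0\}$, whose barriers exploit that the graph of the affine data is a minimal hyperplane — then bounds $\sup_{B_{1/2}(0)\cap\overline{\R^n_+}}|\nabla v|$ by a constant $C=C(n,|a'|,|b|,K)$ not depending on $x_0$. Since $\nabla v(z)=(\nabla u)(\bar x_0+2|x_0|z)$ and $z_0:=\big(0,\dots,0,(x_0)_n/(2|x_0|)\big)$ lies in $\overline{B_{1/2}(0)\cap\R^n_+}$ (as $(x_0)_n\le|x_0|$), it follows that $|\nabla u(x_0)|=|\nabla v(z_0)|\le C$; the range $|x_0|<1$ is covered directly by the same estimate on $B_2(0)\cap\overline{\R^n_+}$, where $\sup|u|\le 3K$. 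This gives $\sup_{\overline{\R^n_+}}|\nabla u|\le C$. I expect this stage to be the main obstacle: the interior estimate alone degenerates at $\partial\R^n_+$, so the boundary gradient estimate is essential, and the decisive structural fact is that the affine boundary data rescales with unchanged tangential gradient, which is exactly what lets one upgrade ``linear growth'' to a uniform gradient bound (the role played by ``uniformly bounded gradient'' in \cite{DiJoXi}).

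With $|\nabla u|\le C$ in hand, \eqref{mse} is uniformly elliptic, so $u\in C^\infty(\overline{\R^n_+})$ by De Giorgi--Nash and Schauder theory (the data is affine and the boundary flat). Writing \eqref{mse} as $\partial_i\big(A_i(\nabla u)\big)=0$ with $A_i(p)=p_i/\sqrt{1+|p|^2}$ and differentiating in a tangential direction $e_j$, $1\le j\le n-1$, the bounded function $\varphi_j:=\partial_j u-\partial_j l$ solves the uniformly elliptic divergence-form equation $\partial_i\big(a^{ik}(x)\,\partial_k\varphi_j\big)=0$ with $a^{ik}(x)=\tfrac{\partial A_i}{\partial p_k}(\nabla u(x))$, and $\varphi_j=0$ on $\{x_n=0\}$ since $u-l$ has vanishing tangential derivatives there. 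The boundary Hölder estimate for such equations yields, for $\bar x\in\partial\R^n_+$ and $x\in B_{\rho/2}(\bar x)\cap\R^n_+$,
\[
|\varphi_j(x)|\le C\Big(\frac{x_n}{\rho}\Big)^{\alpha}\sup_{B_\rho(\bar x)\cap\R^n_+}|\varphi_j|\le C\Big(\frac{x_n}{\rho}\Big)^{\alpha}\sup_{\R^n_+}|\varphi_j|,
\]
with $\alpha,C$ depending only on $n$ and the ellipticity constants. Fixing $x$ and sending $\rho\to\infty$ forces $\varphi_j\equiv 0$, so $\partial_j u\equiv\partial_j l$ for $1\le j\le n-1$.

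Consequently $u(x)=\sum_{j=1}^{n-1}(\partial_j l)\,x_j+h(x_n)$ for some $h\in C^\infty([0,\infty))$ with $h(0)=b$. Substituting into \eqref{mse}, the first $n-1$ terms of the divergence vanish (each summand is a function of $x_n$ alone), and what remains is
\[
0=\partial_{x_n}\!\left(\frac{h'(x_n)}{\sqrt{c+h'(x_n)^2}}\right)=\frac{c\,h''(x_n)}{\big(c+h'(x_n)^2\big)^{3/2}},\qquad c:=1+\sum_{j=1}^{n-1}(\partial_j l)^2>0,
\]
so $h''\equiv 0$, $h$ is affine, and therefore $u$ is an affine function.
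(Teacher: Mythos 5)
Your stages 2--3 are correct, and the real issue sits exactly where you predicted: the uniform gradient bound. After rescaling to the unit half-ball you invoke, as ``classical'', an estimate for $\sup_{B_{1/2}\cap\overline{\R^n_+}}|\nabla v|$ depending only on $n$, $\sup_{B_1^+}|v|$ and the affine data, justified by ``barriers exploiting that the graph of the affine data is a minimal hyperplane''. No such simple barrier argument works in this local setting: an upper barrier of Serrin type $l+\psi(z_n)$, or a tilted plane $l+t\,z_n$, must dominate $v$ on the lateral spherical part of the half-ball, where the only information is $|v|\le 2K$; near the corner where the sphere meets the flat face one has $z_n\to 0$ while $v-l$ is merely bounded, so the required inequality $\psi(z_n)\ge v-l$ (resp.\ $t\,z_n\ge v-l$) fails for any fixed $\psi$ with $\psi(0)=0$ (resp.\ any fixed $t$). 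Flatness of the face and minimality of the affine graph do not remove this obstruction; what is needed is a genuine \emph{local} boundary gradient estimate with data prescribed only on a portion of the boundary, and that is precisely the technical heart of the paper. Lemma \ref{bgd} obtains it not by barriers but by constructing the convex domain of Appendix \ref{A}, whose boundary contains a flat disc strictly larger than $\S_1$, interpolating the boundary datum from $l$ on $\S_1$ to the constant $6K$ within that flat portion (so the transition stays away from the corner), solving the Dirichlet problem there by Jenkins--Serrin, and comparing the resulting exact solution with $u$, the linear growth \eqref{lingro} supplying $u\le 6K$ on the remaining boundary. Moreover, even granting a gradient bound on the flat face, ``interior estimate away from the face plus boundary estimate on the face'' does not by itself control $|\nabla v|$ at interior points with small $z_n$, since the Bombieri--De Giorgi--Miranda bound degenerates there; one still needs a Bernstein/maximum-principle step, as in Lemma \ref{ggd}, in which the auxiliary function's possible boundary maximum is handled by the boundary bound. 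So either supply these two ingredients, or replace ``classical'' by a precise citation of a local boundary gradient estimate for the minimal surface equation at a flat boundary portion (boundary regularity theory for non-parametric least-area problems); as written, the crucial step is asserted rather than proved.

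Apart from that, your bookkeeping in stage 1 is right (the tangential gradient of the data is scale-invariant and \eqref{lingro} gives $\sup|v|\le 2K$), and your endgame is complete and in fact leaner than the paper's: differentiating tangentially produces bounded solutions of a uniformly elliptic divergence-form equation vanishing on $\partial\R^n_+$, so the De Giorgi--Nash boundary H\"older estimate plus scaling forces all tangential derivatives to be constant, and the equation collapses to an ODE in $x_n$. The paper instead controls the normal derivative via Krylov's boundary H\"older estimate and then passes through Theorem \ref{ln}, which in turn invokes the Liouville theorem for entire minimal graphs; your route avoids both of these.
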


\begin{remark}
With the boundary condition \eqref{diri}, we point out that $u$ is smooth up to the boundary, which follows from an approximation procedure and the $C^{1,\alpha}$-estimates for quasilinear elliptic equations. For the convenience of the reader, we provide the details in Appendix \ref{B}.
\end{remark}

For entire minimal graph, Simons \cite{Sim} proved that any minimal graph over $\R^n$ must be a hyperplane for $2\leq n\leq7$. It is of particular interest to know whether the assumption \eqref{lingro} is necessary for above theorem. In two-dimensional case, it follows from the Schwarz reflection principle and Choi-Schoen curvature estimate \cite{CiSc} for minimal surfaces in $\R^3$ that Theorem \ref{ld} is true without the linear growth condition. To the best of our knowledge, the answer is still not clear in higher dimensional cases.
With the idea of reflection, it is fairly easy to prove Theorem \ref{ld} in case $l\equiv0$. To see this, we perform a Schwarz reflection for $u$ to obtain a new function $\tilde u$. Then, $\tilde u$ is an entire solution of minimal surface equation which has at most a linear growth. Theorem \ref{ld} then follows from the Liouville theorem for entire minimal graph.  When $l$ is a general affine function, the Schwarz reflection may not lead to an entire minimal graph, which appears to be a difficulty for Theorem \ref{ld}.

Different from the linear growth condition, we point out that the affine boundary value \eqref{diri} can not be removed. Otherwise, one may consider the function $$f(x)=\int_{1}^{|x|}{dt\over\sqrt{t^2-1}}$$
over the half plane
$$
P_+=\{(x_1,x_2)\in \R^2;\ x_2>2\}.
$$
Through direct calculations, it is quick to see that the function $f$ is a smooth solution of the minimal surface equation and that $|\nabla f|$ is uniformly bounded, but $f$ is not affine. From this point of view, it is interesting to know whether Liouville type theorem will be valid for Neumann boundary condition. The answer is definitely positive. In fact, we prove the following Liouville type theorem for constant Neumann boundary condition.

\begin{theorem}\label{ln}
Let $n\geq 2$ be an integer and $u\in C^2(\R_+^n)\cap C^1(\overline{\R_+^n})$ be a solution of \eqref{mse} with Neumann boundary condition
\begin{equation}
\partial_{x_n}u=\tau\ \ \ \hbox{on $\partial\R_+^n,$}\label{neu}
\end{equation}
where $\tau\in\R$ is a constant. If $u$ satisfies \eqref{lingro}, then $u$ is an affine function.
\end{theorem}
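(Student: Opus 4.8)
The plan is to reduce everything to showing that $\partial_{x_n}u\equiv\tau$ throughout $\R^n_+$. Suppose this is known. Then $u(x',x_n)=\tau x_n+h(x')$ with $h(x'):=u(x',0)$, and substituting into \eqref{mse} (noting that $\sqrt{1+|\nabla u|^2}=\sqrt{(1+\tau^2)+|\nabla' h|^2}$ depends only on $x'$, so the $x_n$-derivative term drops) gives
\begin{equation*}
\div_{\R^{n-1}}\Big(\frac{\nabla' h}{\sqrt{(1+\tau^2)+|\nabla' h|^2}}\Big)=0 .
\end{equation*}
Setting $k:=h/\sqrt{1+\tau^2}$, one checks $\nabla' h/\sqrt{(1+\tau^2)+|\nabla'h|^2}=\nabla' k/\sqrt{1+|\nabla'k|^2}$, so $k$ is an \emph{entire} solution of the minimal surface equation on $\R^{n-1}$, and $|k|\le \tfrac{K}{\sqrt{1+\tau^2}}(|x'|+1)$ has at most linear growth. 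By the Liouville theorem for entire minimal graphs (which, via the interior gradient estimate of Bombieri--De Giorgi--Miranda together with Moser's theorem, holds in all dimensions under a linear growth bound), $k$ is affine; hence $h$ is affine and $u$ is affine.

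So the whole proof hinges on $w:=\partial_{x_n}u-\tau\equiv0$. The relevant observation is that $w$ solves a linearized equation: differentiating \eqref{mse} in the direction $x_n$ (legitimate since interior elliptic regularity makes $u$ smooth in $\R^n_+$), the function $\partial_{x_n}u$, and therefore $w$, satisfies
\begin{equation*}
\sum_{i,j=1}^n\partial_i\big(a^{ij}\,\partial_j w\big)=0 \quad\text{in }\R^n_+,\qquad a^{ij}:=\frac{1}{W}\Big(\delta_{ij}-\frac{\partial_i u\,\partial_j u}{W^2}\Big),\quad W:=\sqrt{1+|\nabla u|^2},
\end{equation*}
while the Neumann condition \eqref{neu} gives exactly $w=0$ on $\partial\R^n_+$. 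The eigenvalues of $(a^{ij})$ are $W^{-1}$ and $W^{-3}$, so this divergence-form operator is uniformly elliptic precisely on regions where $|\nabla u|$ is bounded, and $w$ is bounded there as well. Hence, \emph{granted a global bound $\sup_{\R^n_+}|\nabla u|<\infty$}, the function $w$ is a bounded solution of a uniformly elliptic divergence-form equation in the half space vanishing on the flat boundary; the boundary De Giorgi--Nash--Moser H\"older estimate in half balls $B_{2R}^+$ then yields $|w(x)|\le C(|x|/R)^{\alpha}\sup_{\R^n_+}|w|$, and letting $R\to\infty$ forces $w\equiv0$. Combined with the previous paragraph, this proves the theorem.

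It remains to produce the global gradient bound $\sup_{\R^n_+}|\nabla u|\le C(n,K,\tau)$, and this is the step I expect to be the main obstacle. In the interior, the Bombieri--De Giorgi--Miranda gradient estimate applied on balls $B_{x_n/2}(x)\subset\R^n_+$ together with \eqref{lingro} already bounds $|\nabla u|$ on every cone $\{x_n\ge\varepsilon|x|\}$; the difficulty is the behaviour near $\partial\R^n_+$ at large scale. For this one needs a \emph{boundary} gradient estimate for the minimal surface equation with constant Neumann data on a flat boundary, of the scale-invariant form $\sup_{B^+_{1/2}}|\nabla u|\le C\big(n,\tau,\sup_{B^+_{1}}|u|\big)$. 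This is exactly the place where the constant $\tau$ enters essentially: the naive Schwarz-type reflection across $\partial\R^n_+$ fails for $\tau\neq0$ (the reflected function is only $C^1$ and is no longer a weak solution of a single equation, the conormal flux jumping across the interface), so the estimate must be obtained by other means (barriers, or the approximation and $C^{1,\alpha}$ techniques in the spirit of Appendix \ref{B} adapted to the Neumann problem). Once such a local estimate is in hand, the fact that the dilations $u_\lambda(y):=u(\lambda y)/\lambda$ satisfy \eqref{mse}, \eqref{neu} and \eqref{lingro} with the \emph{same} constant $K$ for every $\lambda\ge1$ upgrades it to the desired uniform bound over all of $\R^n_+$, completing the argument.
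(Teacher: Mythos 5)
Your reduction is sound and is essentially the paper's own strategy: granted a uniform global bound $\sup_{\R^n_+}|\nabla u|\le C(n,K,\tau)$, the derivative $\partial_{x_n}u-\tau$ solves the uniformly elliptic divergence-form linearized equation, vanishes on $\partial\R^n_+$, and the boundary De Giorgi--Nash--Moser estimate together with the dilation invariance of \eqref{mse}, \eqref{neu}, \eqref{lingro} forces $\partial_{x_n}u\equiv\tau$; the dimension reduction $u=\tau x_n+h(x')$, the rescaling $k=h/\sqrt{1+\tau^2}$, and the Liouville theorem for entire minimal graphs with linear growth then finish the argument. All of this matches the proof of Theorem \ref{ln} in the paper (which carries it out via $u_R$, $v_R=\partial_{x_n}u_R$ and the H\"older estimate on $B_1^+$), and your explicit treatment of the $\sqrt{1+\tau^2}$ normalization and of why the Liouville step works in all dimensions is if anything more careful than the text.

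However, there is a genuine gap: the global gradient estimate itself, which you explicitly defer (``once such a local estimate is in hand\dots''), is the core of the proof and occupies most of the paper. The paper proves it as Lemma \ref{ggn} by a Bernstein argument applied to the auxiliary function $\Phi=\eta\,\gamma(u)\log|\nabla w|^2$ with $w=u-\tau x_n$, where the subtraction of $\tau x_n$ and the factor $\gamma(u)=1+u/M$ are chosen precisely so that at a hypothetical maximum point on $\partial\R^n_+$ one computes $(\log\Phi)_n>0$ (the Neumann condition kills the contribution of $\partial_{x_n}|\nabla w|^2$ there), a contradiction that pushes the maximum into the interior, after which a lengthy but standard Bernstein computation closes the estimate; this is the Ma--Xu idea the paper cites, and interior reduction to Lemma \ref{ggd} handles the rest. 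Your suggested substitutes do not obviously fill this hole: Schwarz reflection fails for $\tau\neq 0$, as you note, and the local $C^{1,\alpha}$ regularity theory for Neumann problems (Lieberman, Gilbarg--Trudinger) gives only a qualitative bound near a fixed boundary portion, not the scale-invariant quantitative bound $\sup_{B^+_{1/2}}|\nabla u|\le C(n,\tau,\sup_{B^+_1}|u|)$ that must survive the dilations $u_R(x)=R^{-1}u(Rx)$; a barrier construction for the gradient with Neumann data is likewise not routine. So as written, the proposal establishes the theorem only modulo the main technical lemma, and a complete proof needs the boundary-avoiding Bernstein estimate (or an equivalent) supplied in detail. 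A minor additional slip: the interior Bombieri--De Giorgi--Miranda bound on balls $B_{x_n/2}(x)$ does not by itself control the cone $\{x_n\ge\epsilon|x|\}$ near the origin (the ratio $\sup|u|/x_n$ degenerates there); this is harmless only because $u\in C^1(\overline{\R^n_+})$ handles compact sets, but it signals that the genuine difficulty is uniformity near $\partial\R^n_+$ at all scales, which is exactly what Lemma \ref{ggn} provides.
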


\begin{remark}
With the boundary condition \eqref{neu}, we note that $u$ is smooth up to the boundary. For more details, we refer the reader to \cite[Theorem 4.5]{Li} and \cite[Theorems 6.30-6.31]{GilTr}.
\end{remark}

As in the Dirichlet case, it is also not clear in the Neumann case whether the linear growth condition is necessary when $n\leq 7$. However, we notice $u$ is affine provided it is a solution of \eqref{mse} satisfying \eqref{diri} and \eqref{neu}, which is a direct conclusion from unique continuation property due to \cite{GaLi}.

In the following, we sketch the proof for our main theorems. For the Dirichlet case, the key ingredient is to obtain a uniform gradient estimate for the solution with affine boundary value and linear growth condition. For this purpose, we establish a boundary gradient estimate first, then the classical Bernstein technique due to \cite{Wa} yields the desired estimate. It turns out that the scaling invariance of the minimal surface equation and comparison principle make the linear growth come into play for boundary gradient estimate. To be explicit, we fix a weakly mean convex domain with some flat portion $T$ of $\R^n_+$ on its boundary. After imposing a particular smooth boundary value that coincides with the affine one on $T$, we obtain a smooth solution to the minimal surface equation as a comparison function. Compared with the rescaled solution $u_R(\cdot)=R^{-1}u(R\cdot)$, we obtain a uniformly bounded boundary gradient estimate. Then with the uniform gradient estimate derived from Bernstein method, we have a H\"older estimate for $\partial_{x_n}u$ by Krylov \cite{Kr}, which deduces that $\partial_{x_n}u$ is a constant by a scaling argument. At this stage, Theorem \ref{ld} follows easily from unique continuation \cite{GaLi} or Theorem \ref{ln}.

The proof follows a similar line for Neumann case. For gradient estimate, we apply the Bernstein method as usual but with a modified function to avoid its maximum appearing on the boundary, where the idea to push the maximum point inside is inspired from the work in Ma and Xu \cite{MaXu}. With the uniformly bounded gradient, we obtain the H\"older estimate for $\partial_{x_n}u$, which yields $\partial_{x_n}u$ to be a constant using the scaling argument. In this case, we can express $u$ to be a sum of $\tau x_n$ and an entire solution of minimal surface equation in $\R^{n-1},$ hence $u$ is affine.

The rest of this paper will be organized as follows. In section 2, we present details for gradient estimates in both Dirichlet and Neumann boundary condition. In section 3, we prove our main theorems.

\section{Gradient Estimate}

Throughout this paper, following notation will be used frequently.
\begin{itemize}
\item[(i)] For $i,j\in\{1,\ldots,n\},$ the Kronecker symbol $\delta_{ij}$ is given by 
\begin{equation*}
\delta_{ij}=\begin{cases}
1&\text{if $i=j,$}\\
0&\text{if $i\neq j.$}\\
\end{cases}
\end{equation*}
\item[(ii)] We particularly distinguish the $n$-th component and write $$x=(x',x_n)\ \ \ \hbox{for any $x\in\R^{n},$}$$
where $x'\in\R^{n-1}$ and $x_n\in\R.$
\item[(iii)] For $r>0,$\ $B_r(x_0)$ is the open ball of radius $r$ and center $x_0$ in $\R^n$, and $$B_r^+(x_0)=B_r(x_0)\cap\R_+^n.$$
If $x_0=0,$ we use $B_r$ to briefly represent $B_r(x_0)$ and $$\S_r=\overline{B_r}\cap\{x\in\R^n;\ x_n=0\},\ B_r^+=B_r\cap\R_+^n.$$
\item[(iv)] $C$ denotes a positive universal constant depending only on $n$ and $K$, whose meaning may be different from line to line.
\item[(v)]In section \ref{2.2}, we will use subscripts to write derivatives as$$(\cdot)_i=\partial_{x_i}(\cdot),\ (\cdot)_{ij}=\partial_{x_ix_j}(\cdot),\ (\cdot)_{ijk}=\partial_{x_ix_jx_k}(\cdot)$$
for brevity, whose meanings will be different from those subscripts of coefficients $a_{ij}.$
\end{itemize}

\subsection{Gradient Estimate for Dirichlet Problem}
In this subsection, we present the proof of global gradient estimate for Dirichlet case. We begin with the boundary gradient estimate as following.

\begin{lemma}\label{bgd}
Assume $u\in C^2(\overline{\R_+^n})$ is any solution of \eqref{mse}-\eqref{diri} satisfying \eqref{lingro}. Then
\begin{equation}\label{pd1}
\sup_{x\in\partial\R_+^n}|\nabla u(x)|<C
\end{equation}
for some universal constant $C>0$ (independent of $u$).
\end{lemma}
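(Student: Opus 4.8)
The plan is to exploit the scaling invariance of the minimal surface equation together with the comparison principle, using a fixed mean-convex obstacle domain as the carrier of a comparison function. Since $l$ is affine, after subtracting $l$ we may assume $l\equiv 0$ on $\partial\R^n_+$ and that $u$ still satisfies \eqref{lingro} with a possibly larger $K$ (subtracting an affine function does not affect equation \eqref{mse}, only shifts $\nabla u$ by a constant vector). It therefore suffices to bound $|\nabla u|$ on a neighborhood of an arbitrary boundary point, which by translation in $x'$ we take to be the origin. The key observation is that the rescaled functions $u_R(x)=R^{-1}u(Rx)$ again solve \eqref{mse}, still vanish on $\partial\R^n_+$, and by \eqref{lingro} satisfy the uniform bound $|u_R(x)|\leq K(|x|+R^{-1})\leq K(|x|+1)$ on $\overline{B_2^+}$, say, for all $R\geq 1$. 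Thus a single barrier argument giving $|\nabla u_R(0)|\leq C$ uniformly in $R$ yields, upon undoing the scaling, $|\nabla u(0)|\leq C$ with the same constant $C$; since the origin was an arbitrary boundary point, this is exactly \eqref{pd1}.

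The barrier itself I would construct as follows. Fix once and for all a bounded domain $\Omega\subset\R^n_+$ whose boundary is weakly mean convex, is smooth, and contains a flat disk $T=\Sigma_2\subset\partial\R^n_+$ in its boundary (for instance a half-ball-like region smoothed near the equator, or the region between $\partial\R^n_+$ and a large sphere tangent to it from above). On $\partial\Omega$ prescribe a smooth boundary datum $\varphi$ that equals $0$ on $T$ and equals $K(|x|+1)$ (or any fixed smooth function dominating $|u_R|$ on $\partial\Omega\setminus T$) elsewhere; since $\partial\Omega$ is mean convex, the Dirichlet problem for \eqref{mse} in $\Omega$ with data $\varphi$ has a unique smooth solution $w\in C^\infty(\overline\Omega)$ by the classical Jenkins–Serrin / Gilbarg–Trudinger existence theory. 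This $w$ is a fixed function, independent of $R$ and of $u$, and in particular $|\nabla w|$ is bounded on $\overline\Omega$. By the comparison principle applied in $\Omega$: on $\partial\Omega\setminus T$ we have $u_R\leq K(|x|+1)=\varphi=w$ and similarly $-u_R\leq w$, while on $T$ both $u_R$ and $w$ vanish; hence $|u_R|\leq w$ throughout $\Omega$. Since $u_R=w=0$ on $T$ and $u_R-w$ attains an interior-to-$T$ sign, comparing normal derivatives at the origin gives $|\partial_{x_n}u_R(0)|\leq |\partial_{x_n}w(0)|\leq C$; the tangential derivatives $\partial_{x_i}u_R(0)$, $i<n$, vanish because $u_R\equiv 0$ on the flat piece $T$. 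This bounds $|\nabla u_R(0)|$ by a universal constant, and the scaling argument above finishes the proof.

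The main obstacle, and the point that needs genuine care rather than citation, is the construction of the comparison domain $\Omega$ and datum $\varphi$ so that simultaneously (a) $\partial\Omega$ is weakly mean convex so that the minimal surface Dirichlet problem is solvable with \emph{arbitrary} smooth boundary data (this is exactly the hypothesis in the classical existence theorems — mean convexity of the boundary is what removes the usual gradient-bound obstruction to solvability), (b) a flat neighborhood $T$ of the origin inside $\partial\R^n_+$ is part of $\partial\Omega$, and (c) the prescribed data dominates $|u_R|$ on the curved part of $\partial\Omega$ \emph{uniformly in} $R\geq 1$, which is where the linear growth hypothesis \eqref{lingro} is used in an essential way. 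A secondary technical point is justifying that $u_R\in C^2(\overline\Omega)$ (so that the comparison principle and the normal-derivative comparison at a boundary point are legitimate); this is supplied by the boundary regularity remark following Theorem \ref{ld}, proved in Appendix \ref{B}. Once $\Omega$, $\varphi$, $w$ are in hand, everything else is the standard comparison-principle bookkeeping sketched above.
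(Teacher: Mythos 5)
Your overall strategy --- rescale $u_R(x)=R^{-1}u(Rx)$, fix one bounded weakly mean convex domain $\Omega$ whose boundary contains a flat piece $T\subset\partial\R_+^n$, solve the Dirichlet problem there to get a comparison function, and use \eqref{lingro} to make the comparison uniform in $R$ --- is exactly the paper's argument. But your opening reduction contains a genuine error: you claim that after subtracting $l$ you may assume $l\equiv 0$ because ``subtracting an affine function does not affect equation \eqref{mse}, only shifts $\nabla u$ by a constant vector.'' The minimal surface equation is \emph{not} invariant under adding or subtracting a non-constant affine function (shearing the graph is not an isometry of $\R^{n+1}$ and does not preserve minimality; e.g.\ for the Scherk solution $u=\log(\cos y/\cos x)$ the function $u+x$ fails the equation). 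This is precisely why the paper, when it does set $\bar u=u-l$ in the proof of Theorem \ref{ld}, must work with the modified coefficients $\bar a_{ij}$ involving $\beta$, and why in Lemma \ref{bgd} it keeps the affine datum: one may only normalize $l(0)=0$ (subtracting a \emph{constant} is harmless), and the barrier's boundary value is taken to coincide with $l$ on $\Sigma_1$, interpolating to the constant $6K$ away from it. Your subsequent steps (data $0$ on $T$, $|u_R|\le w$, tangential derivatives of $u_R$ vanish on $T$) all lean on the false reduction; with $l$ retained they must be replaced, as in the paper, by upper and lower barriers with datum $l$ on the flat part, the tangential derivatives then being $\nabla' l$, which is bounded by $K$ thanks to \eqref{lingro} on $\partial\R_+^n$.

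A secondary but real defect is your prescription of the comparison datum: a function that ``equals $0$ on $T$ and equals $K(|x|+1)$ elsewhere'' on $\partial\Omega$ is discontinuous across the edge of $T$ (it jumps from $0$ to roughly $3K$), so it is not an admissible smooth boundary value for the Jenkins--Serrin existence theorem, and one cannot instead shrink the jump, since near the edge the only available bound on $u_R$ on the curved part of $\partial\Omega$ is $K(|x|+1)\approx 3K$, uniformly in $R$. The paper's fix is to demand agreement with the boundary datum only on the smaller disk $\Sigma_1$ and to interpolate smoothly (via the cutoff $\rho$) across the annulus $1\le|x|\le2$ of the flat face; the estimate is then obtained only on $\Sigma_1$, which suffices after the translation-and-scaling step (itself requiring the normalization $l(x_0)=0$ at the chosen boundary point, again a constant subtraction). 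With these two repairs your argument becomes the paper's proof.
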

\begin{proof}
Without loss of generality, we may assume that $l(0)=0.$ It suffices to show that
\begin{equation}\label{pd2}
\sup_{x\in\Sigma_1}|\nabla u(x)|<C
\end{equation}
for some universal constant $C>0$.
To be explicit, we set $$u_R(x)={1\over R}u(Rx)\ \ \ \hbox{for $R>1$ and $x\in\overline{\R_+^n}$}.$$
It is clear that $u_R\in C^2(\overline{\R_+^n})$ solves \eqref{mse}-\eqref{diri}. For $R>1,$ it follows from \eqref{lingro} that $$
|u_R(x)|\leq K|x|+{K\over R}\leq K(|x|+1)\ \ \ \hbox{for every $x\in\R_+^n.$}
$$
Thanks to \eqref{pd2}, we have $$\sup_{x\in\Sigma_1}|\nabla u(Rx)|=\sup_{x\in\Sigma_1}|\nabla u_R(x)|<C\ \ \ \hbox{for every $R>1$,}$$
which implies \eqref{pd1}.

Now we turn to the proof of \eqref{pd2}. Let $\Omega\subset\R^n$ be the convex domain constructed in the Appendix \ref{A}, then the convexity of $\Omega$ implies that the boundary mean curvature $H_{\partial\Omega}$ is non-negative. Choose a smooth function $\rho:[0,+\infty)\to\R$ such that
\begin{align*}
&\rho(t)\equiv0\ \ \ \hbox{for every $t\in[0,1],$}\\
&\rho(t)\in[0,1]\ \ \ \hbox{for every $t\in[1,2],$}\\
&\rho(t)\equiv1\ \ \ \hbox{for every $t\in[2,\infty).$}
\end{align*}
For $x\in\overline\Omega$, set
\begin{equation*}
\phi(x)=6K\rho(|x|)+(1-\rho(|x|))l(x),
\end{equation*}
then $\phi\in C^3(\overline\Omega).$ By the construction of $\Omega,$ we have
\begin{equation}\label{pd5}
\phi(x)=6K\ \ \ \hbox{if $x\in\partial\Omega\cap\R_+^n;$} \,\,\,\,\,\,\,\,\phi(x)=l(x)\ \ \ \hbox{if $x\in\Sigma_1.$}
\end{equation}
Let $v\in C^2(\overline\Omega)$ be a solution to following Dirichlet problem
$$
\div\left({\nabla v\over\sqrt{1+|\nabla v|^2}}\right)=0\ \hbox{in $\Omega,$}\ \ \ v=\phi\ \hbox{on $\partial\Omega,$}
$$
whose existence is given by \cite{JeSe} or \cite[Theorem 16.10]{GilTr}. Notice that
\begin{equation*}
u(x)\leq K(|x|+1)\leq6K\ \ \ \hbox{for every $x\in\overline\Omega,$}
\end{equation*}
with \eqref{diri}, \eqref{pd5} and the maximum principle, we know $u\leq v$ in $\Omega.$ By (\ref{pd5}), we have $$
\partial_{x_n}u(x)\leq\partial_{x_n}v(x)\leq|\nabla v|_{L^\infty(\Omega)}\ \ \ \hbox{for every $x\in\Sigma_1$}.$$
A similar fashion gives $$\partial_{x_n}u(x)\geq-|\nabla v|_{L^\infty(\Omega)}\ \ \ \hbox{for every $x\in\Sigma_1.$}$$ Therefore, we get a uniform bound for $|\nabla u|$ on $\Sigma_1,$ which yields (\ref{pd2}).
\end{proof}
Using the classical Bernstein technique, we have
\begin{lemma}\label{ggd}
Let $n\geq2$ be an integer and $u\in C^2(\overline{\R_+^n})$ be a solution of \eqref{mse}. Assume $u$ satisfies \eqref{lingro} and \eqref{pd1}. Then $|\nabla u|\in L^\infty(\R_+^n).$
\end{lemma}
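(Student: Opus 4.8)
The plan is to run the classical interior Bernstein argument for the minimal surface equation, localized on half-balls $B_{2R}^+$, and then let $R\to\infty$; the boundary gradient bound \eqref{pd1} is what allows us to control the contribution of $\partial\R^n_+$. Concretely, write $W=\sqrt{1+|\nabla u|^2}$ and recall that $v:=\log W$ satisfies a linear elliptic inequality of the form $a_{ij}v_{ij}\geq c\,|\nabla v|^2$ on $\R^n_+$ with $a_{ij}=\delta_{ij}-u_iu_j/W^2$ (this is the standard computation; one differentiates \eqref{mse}, contracts with $u_k$, and uses the Cauchy--Schwarz inequality on the second-derivative terms, exactly as in \cite{Wa}). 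Equivalently, one can work directly with $W$, which is a subsolution of the minimal-surface-linearized operator. Then I would apply the maximum principle to the function $\zeta^2 W$ (or $\zeta^2 v$) on $\overline{B_{2R}^+}$, where $\zeta$ is a cutoff supported in $B_{2R}$ with $\zeta\equiv 1$ on $B_R$ and $|\nabla\zeta|\leq C/R$, $|\nabla^2\zeta|\leq C/R^2$.

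The key steps, in order, are: (1) derive the differential inequality for $v=\log W$ in $\R^n_+$; (2) consider $g=\zeta^2 v$ on $\overline{B_{2R}^+}$ and locate its maximum point $x_0$; (3) if $x_0\in\partial\R^n_+$, then since $v=\log W$ and $|\nabla u(x_0)|<C$ by \eqref{pd1}, we already get $v(x_0)\le C$, hence $g\le C$ there; (4) if $x_0$ is an interior point of $B_{2R}^+$, use $\nabla g(x_0)=0$ and $a_{ij}g_{ij}(x_0)\le 0$ together with the inequality from step (1) to obtain, after absorbing cross terms with Cauchy--Schwarz, a bound of the form $\zeta^2|\nabla v|^2\le C$ at $x_0$ coming only from the cutoff derivatives (the $1/R$ and $1/R^2$ factors combine with $\zeta$, not blowing up); (5) conclude $\sup_{B_R^+}v\le \sup_{\overline{B_{2R}^+}}g\le C$ uniformly in $R$, and let $R\to\infty$ to get $W\le e^C$ on all of $\R^n_+$, i.e. $|\nabla u|\in L^\infty(\R^n_+)$. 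Note that a cutoff depending only on $|x|$ has no normal derivative issues on the flat boundary $\partial\R^n_+$, so no extra boundary term is generated there beyond what \eqref{pd1} handles.

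The main obstacle is step (4): controlling the maximum when it is interior, one must be careful that the gradient terms produced by differentiating the cutoff $\zeta$ (which carry factors $1/R$) are genuinely absorbed into the good negative term $-c\,\zeta^2|\nabla v|^2$ via Young's inequality, so that the final bound is $R$-independent; this is a standard but slightly delicate bookkeeping computation. A minor point to check is that \eqref{pd1} is stated on all of $\partial\R^n_+$ (not just $\Sigma_1$), which is exactly what Lemma \ref{bgd} provides, so the boundary case in step (3) is uniform in $R$. Everything else is routine. Once $|\nabla u|$ is bounded we are done; no use of the linear growth condition \eqref{lingro} is needed beyond what already went into \eqref{pd1}.
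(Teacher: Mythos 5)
Your step (3) is exactly how the paper handles the boundary: its proof of this lemma is just Wang's interior Bernstein computation \cite{Wa}, with the boundary gradient bound \eqref{pd1} from Lemma \ref{bgd} used only to rule out a maximum of the auxiliary function on $\partial\R_+^n$. The problem is your interior step (4), together with your closing claim that \eqref{lingro} is not needed; that claim is wrong, and the cutoff-only argument does not close.

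Concretely, with $v=\log W$, $W=\sqrt{1+|\nabla u|^2}$, and a cutoff $\zeta=\zeta(x)$, the good quadratic term available at an interior maximum of $g=\zeta^2v$ is $\zeta^2a_{ij}v_iv_j$, and the matrix $a_{ij}=\delta_{ij}-u_iu_j/W^2$ has smallest eigenvalue $W^{-2}$, attained in the direction of $\nabla u$. The error terms coming from $\nabla\zeta$ and $\nabla^2\zeta$ carry no such $W^{-2}$ factor, so Young's inequality cannot absorb them uniformly in $W$; moreover, at the maximum one has $\zeta\nabla v=-2v\nabla\zeta$, so even the bound you claim, $\zeta^2|\nabla v|^2\le C$, would only give $v\le CR$ (since $|\nabla\zeta|\le C/R$), i.e.\ $|\nabla u|\le e^{CR}$, which is useless as $R\to\infty$. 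More structurally, when the maximum is interior your computation is purely local and never sees \eqref{pd1}; if it produced a bound depending only on $n$, it would be an interior gradient estimate for minimal graphs independent of the solution, which is false (steep affine functions). The sharp interior estimate has the form $|\nabla u(x_0)|\le C_1\exp\left(C_2\sup|u|/R\right)$, and the dependence on $\sup|u|$ cannot be removed. This is exactly where \eqref{lingro} enters: in \cite{Wa} (and in the paper's own Lemma \ref{ggn}, where the test function carries the factor $\gamma(u)=1+u/M$ with $M$ determined by $K$) the auxiliary function depends on $u$ itself, and the linear growth, via the rescaling $u_R(x)=R^{-1}u(Rx)$, keeps $\sup|u_R|$ bounded on unit balls so that the resulting gradient bound is uniform. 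So keep your boundary case (3), but replace the $x$-only cutoff by the $u$-weighted auxiliary function of \cite{Wa} or Lemma \ref{ggn} and feed in \eqref{lingro} through the scaling; as written, steps (4)--(5) have a genuine gap.
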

\begin{proof}
Following the calculation in \cite{Wa}, the only difficulty in our case is that the maximum point of the auxiliary function may locate on $\partial\R_+^n$. However, this can be overcome by \eqref{bgd}.
\end{proof}

\subsection{Gradient Estimate for Neumann Problem}\label{2.2}
In this subsection, we apply the Bernstein method to derive the global gradient estimate for Neumann problem.

\begin{lemma}\label{ggn}
If $u\in C^2(\overline{\R_+^n})$ satisfies \eqref{mse}, \eqref{lingro} and \eqref{neu}, then $|\nabla u|\in L^\infty(\R_+^n).$
\end{lemma}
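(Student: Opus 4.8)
The plan is to run the classical interior Bernstein argument for the minimal surface equation, but with an auxiliary function modified so that its maximum cannot be attained on the flat boundary $\partial\R^n_+$, thereby reducing matters to the interior estimate. Concretely, fix a large ball $B_{2r}$, let $\eta$ be a standard cutoff supported in $B_{2r}$ with $\eta\equiv1$ on $B_r$, and consider a test function of the shape
\begin{equation*}
w=\eta^2\,\varphi(|\nabla u|^2)\,e^{\alpha\psi},
\end{equation*}
where $\varphi$ is the usual increasing weight from Wang \cite{Wa} (something like $\varphi(s)=s$ or $\log(1+s)$), $\psi$ is a function of $x_n$ alone (e.g. $\psi=x_n$ or $\psi=e^{-\beta x_n}$, chosen with the right sign so that the extra term it contributes to the linearized operator $a_{ij}w_{ij}$ at a would-be boundary maximum has a definite, wrong sign), and $\alpha>0$ is a constant to be fixed. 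Here $a_{ij}=\delta_{ij}-u_iu_j/(1+|\nabla u|^2)$ are the coefficients of the (elliptic, after dividing through by $\sqrt{1+|\nabla u|^2}$) minimal surface operator, and one uses that $u_i$ satisfies the linearized equation $\partial_j(a_{ij}u_i/\sqrt{1+|\nabla u|^2})=0$.

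First I would record the Neumann condition in a usable form: since $u_n=\tau$ on $\{x_n=0\}$, differentiating tangentially gives $u_{in}=0$ on the boundary for $i<n$, so $\nabla|\nabla u|^2=2u_{nn}(0,\dots,0,u_n)$ there, i.e. the gradient of $|\nabla u|^2$ is purely normal on $\partial\R^n_+$. Next I would suppose, for contradiction-free bookkeeping, that the maximum of $w$ over $\overline{B_{2r}^+}$ is attained at an interior point $x_0$ of $B^+_{2r}$; then the computation is exactly the one in \cite{Wa}, producing a bound $|\nabla u(x_0)|\le C(n)/r$ after absorbing first-order terms into second-order ones via the structure of $a_{ij}$, and letting $r\to\infty$ kills the constant, giving $\nabla u\equiv$const — but that is too strong, so in fact one keeps $r$ fixed and obtains the local bound $\sup_{B_r^+}|\nabla u|\le C(n,r)$ on any fixed ball, which upgrades to $|\nabla u|\in L^\infty$ by the linear growth hypothesis \eqref{lingro} on a sequence of balls (the growth bound forces the constant in the Bernstein estimate to be independent of the center). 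The remaining case, that the maximum of $w$ sits at a point $x_0\in\Sigma_{2r}\subset\partial\R^n_+$, is where the modification earns its keep: at such a point the outward normal derivative satisfies $\partial_\nu w=-\partial_{x_n}w\ge0$, and I would show that with the correct choice of sign for $\psi$ (using $u_{in}=0$ on the boundary to see that $\partial_{x_n}\varphi(|\nabla u|^2)$ and $\partial_{x_n}\eta^2$ are both controlled, so the $e^{\alpha\psi}$ factor dominates) this forces $\partial_{x_n}w<0$ at $x_0$ unless $w(x_0)=0$, a contradiction; hence the boundary case cannot occur and we are always in the interior situation above. This is the mechanism borrowed from Ma and Xu \cite{MaXu}.

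The main obstacle I expect is the bookkeeping in the boundary case: one must verify that the tangential derivatives $u_{in}=0$ ($i<n$) on $\Sigma_{2r}$ really do kill all the potentially-bad boundary contributions of $\varphi(|\nabla u|^2)$, that the cutoff $\eta$ (radial, hence with $\partial_{x_n}\eta=0$ on $\{x_n=0\}$) contributes nothing on the boundary, and then that the single surviving term $\alpha\,\partial_{x_n}\psi\cdot w$ has the sign needed to contradict $\partial_\nu w\ge0$ — this pins down whether to take $\psi$ increasing or decreasing in $x_n$, and one must simultaneously ensure that the same choice of $\psi$, now appearing through its first and second $x_n$-derivatives in $a_{ij}w_{ij}$ at an interior maximum, does not spoil the interior Bernstein computation (so $\alpha$ and, if used, $\beta$ must be chosen with the interior inequality in hand). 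Once the auxiliary function is correctly calibrated, the rest is the standard maximum-principle argument of \cite{Wa} applied on $\overline{B^+_{2r}}$, combined with \eqref{lingro} to globalize.
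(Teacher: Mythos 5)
Your overall strategy --- a Bernstein argument with an auxiliary function doctored so that its maximum cannot sit on $\{x_n=0\}$, in the spirit of Ma--Xu, combined with rescaling and \eqref{lingro} to globalize --- is the same as the paper's (which first invokes Lemma \ref{ggd} for the interior and then devotes the whole proof to a boundary bound at one point after translation/rescaling). But there is a genuine gap at exactly the point where your argument must do real work: the exclusion of a boundary maximum. You build the test function out of $\varphi(|\nabla u|^2)$ and claim that the Neumann condition makes $\partial_{x_n}\varphi(|\nabla u|^2)$ ``controlled'' on the boundary. It does not. Differentiating $u_n=\tau$ tangentially only gives $u_{in}=0$ for $i<n$; hence on $\{x_n=0\}$ one has $\partial_{x_n}|\nabla u|^2=2\sum_k u_k u_{kn}=2\tau u_{nn}$, and $u_{nn}$ is a second derivative with no a priori bound and no sign. (Also, the tangential derivatives $\partial_i|\nabla u|^2=2\sum_k u_k u_{ki}$ need not vanish, so the gradient of $|\nabla u|^2$ is not purely normal on the boundary, contrary to your side remark.) Consequently, at a hypothetical boundary maximum the inward normal derivative of your test function equals $e^{\alpha\psi}\eta^2\bigl(2\tau u_{nn}\varphi'+\alpha\psi'(0)\varphi\bigr)$ (the radial cutoff contributes nothing), and when $\tau>0$ and $u_{nn}$ is very negative no fixed choice of $\alpha$ and $\psi$ produces a sign contradiction. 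Your mechanism works only for $\tau=0$. There is also a sign slip: at a boundary maximum the inward derivative is $\le 0$, so to reach a contradiction you must show it is strictly positive, not negative as written.

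The missing device, which is the paper's key step, is to subtract the linear function before taking gradients: set $w=u-\tau x_n$ and use $\log|\nabla w|^2$ (the paper takes $\Phi=\eta\,\gamma(u)\log|\nabla w|^2$ with $\gamma(t)=1+t/M$ and $\eta$ a cutoff centered at the interior point $y_0=(0,\dots,0,1)$). Since $w_n\equiv0$ on $\partial\R_+^n$, its tangential derivatives $w_{in}$ vanish there, hence $\partial_{x_n}|\nabla w|^2=2\sum_k w_k w_{kn}=0$ on the boundary and the uncontrolled term disappears; the strictly positive contributions $\eta_n/\eta=4/(4-|y_1-y_0|^2)$ and $(\gamma'/\gamma)u_n=\tau/(M+u)\ge 0$ then give $(\log\Phi)_n>0$ at a would-be boundary maximum, the desired contradiction (the paper's Case 1). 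The price is that the interior maximum case must be redone for $|\nabla w|^2$ rather than $|\nabla u|^2$, absorbing the cross terms coming from $w_k=u_k-\tau\delta_{kn}$ by assuming $|\nabla u|$ large compared with $\tau$; this occupies most of the paper's proof and is not simply ``the computation in Wang.'' Finally, the interior Bernstein estimate does not yield a bound of the form $C(n)/r$: for the minimal surface equation the bound depends (exponentially) on $\sup|u|/r$, and it is the rescaling $u_R(x)=R^{-1}u(Rx+x_0)$ together with \eqref{lingro} that makes the constant uniform.
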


\begin{proof}
According to the Lemma \ref{ggd}, we suffice to provide an upper bound for $|\nabla u|$ on $\partial\R_+^n$. 
To this end, it is enough to show that 
\begin{align}|\nabla u(0)|\leq C\label{gradu0}
\end{align} for some universal constant $C>0.$ To be explicit, for each $x_0\in\partial\R_+^n$ and $R>1+|x_0|$, we set $$
u_R(x)=\frac{1}{R}u(Rx+x_0)\ \ \ \hbox{for $x\in\overline{\R_+^n}.$}$$
Note that $u_R$ still satisfies \eqref{mse}, \eqref{lingro} and \eqref{neu}. Since $\nabla u(x_0)=\nabla u_R(0),$ we have $|\nabla u(x_0)|\leq C.$ 

We now prove \eqref{gradu0}. In what follows, we assume $\tau\geq0$ and $|\nabla u(0)|\geq(10+n+\tau)^{10}$. Restricting $u$ on $\overline{B_2^+(y_0)}$ for $y_0=(0,\ldots,0,1)\in\R^n,$ we may assume$$0\leq u\leq M=8K\ \hbox{in $B_2^+(y_0);$}$$ otherwise, we consider $u-\inf\limits_{B_2^+(y_0)}u$ instead. Set
\begin{align*}
&\eta(x)=\(1-\frac{|x-y_0|^2}{4}\)^2\ \ \ \hbox{for $x\in\overline{\R_+^n}$},\\
&\gamma(t)=1+\frac{t}{M}\ \ \ \hbox{for $0\leq t\leq M,$}\\
&w(x)=u(x)-\tau x_n\ \ \ \hbox{for $x\in\overline{\R_+^n}$},\\
&\Omega=\left\{x\in\overline{B_2^+(y_0)};\ |\nabla w|\geq(10+n+\tau)^{10}\right\},
\end{align*}
and we define $$\Phi(x)=\eta(x)\gamma(u(x))\log|\nabla w|^2\ \ \ \hbox{for $x\in\Omega.$}$$
Then $\Phi$ attains its maximum at some $y_1\in\Omega\setminus\partial B_2(y_0).$ The rest of the proof will be divided into three cases.

{\sl Case 1.} $y_1\in\partial\R_+^n.$ In this case,
$$(\log\Phi)_n(y_1)=\frac{4}{4-|y_1-y_0|^2}+\frac{\tau}{M+u(y_1)}>0,$$ 
which is a contradiction.

{\sl Case 2.} $y_1\in\partial\Omega.$ In this case, we also have $\log|\nabla w(0)|\leq C\log(10+n+\tau).$

{\sl Case 3.} $y_1$ is an interior point of $\Omega.$ Then we have $\nabla(\log\Phi)=0$ and $\nabla^2(\log\Phi)$ is negative definite at $y_1.$ In some neighborhood of $y_1$, the minimal surface equation can be written as 
\begin{align}
\sum_{i,j=1}^na_{ij}(\nabla u)u_{ij}=0,\label{ndmse}
\end{align}
where each coefficient $a_{ij}$ is given by $$a_{ij}(p)=\delta_{ij}-\frac{p_ip_j}{1+|p|^2}\ \ \ \hbox{for $p=(p_1,\ldots,p_n)\in\R^n.$}$$ 

In order to simply our calculation, we choose a suitable coordinate such that 
\begin{align}
&u_1(y_1)=|\nabla u(y_1)|>0,\,\,\, u_i(y_1)=0\ \ \ \hbox{for $2\leq i\leq n,$}\label{ui}\\
&u_{ij}(y_1)=\lambda_i\delta_{ij}\ \ \ \hbox{for $2\leq i,j\leq n,$}\label{uij}
\end{align}
then $w_1(y_1)\geq u_1(y_1)-\tau>0$ and $$\sum_{i=2}^n|w_i(y_1)|^2\leq\tau^2.$$
Under the new coordinate, $u$ still satisfies the minimal surface equation \eqref{ndmse}, it follows from \eqref{ui} and \eqref{uij} that
\begin{align}
&a_{11}(\nabla u(y_1))=\frac{1}{1+u_1^2},\ a_{ii}=1\ (i\neq1),\ a_{ij}(\nabla u(y_1))=0\ (i\neq j),\label{aij}\\
&\partial_{p_1}a_{11}(\nabla u(y_1))=-\frac{2u_1}{(1+u_1^2)^2},\ \partial_{p_i}a_{11}(\nabla u(y_1))=0\ (i\neq1),\label{pa11}\\ 
&\partial_{p_i}a_{1i}(\nabla u(y_1))=-\frac{u_1}{1+u_1^2}\ (i\neq1),\ \partial_{p_j}a_{1i}=0\ (j\neq1,i\neq j),\\
&\partial_{p_k}a_{ij}(\nabla u(y_1))=0\ (i\neq1,j\neq1).\label{paij}
\end{align}

In the following, we work at the point $y_1$ to evaluate $$\sum\limits_{i,j=1}^na_{ij}(\nabla u)(\log\Phi)_{ij}.$$
Through a simple differentiation, there holds 
\begin{align}\label{Phi}
(\log\Phi)_i=\frac{\eta_i}{\eta}+\frac{\gamma'} {\gamma}u_i+\frac{(|\nabla w|^2)_i}{|\nabla w|^2\log|\nabla w|^2}=0.
\end{align}
Since
\begin{align*}
(\log\Phi)_{ij}=\frac{(|\nabla w|^2)_{ij}}{|\nabla w|^2\log|\nabla w|^2}-&(1+\log|\nabla w|^2)\frac{(|\nabla w|^2)_i(|\nabla w|^2)_j}{|\nabla w|^4\log^2|\nabla w|^2}\\
 &+\frac{\eta_{ij}}{\eta}-\frac{\eta_i\eta_j}{\eta^2}+\left(\frac{\gamma''}{\gamma}- \frac{(\gamma')^2}{\gamma^2} \right)u_iu_j+\frac{\gamma'}{\gamma}u_{ij},
 \end{align*}
 by \eqref{Phi} and a direct substitution, we obtain 
 \begin{align}
 \begin{split}
 (\log\Phi)_{ij}=\frac{(|\nabla w|^2)_{ij}}{|\nabla w|^2\log|\nabla w|^2}-&(1+\log|\nabla w|^2)\left(\frac{\eta_i}{\eta}+\frac{\gamma'}{\gamma}u_i\right) \left(\frac{\eta_j}{\eta}+\frac{\gamma'}{\gamma}u_j\right)\\
 &+ \frac{\eta_{ij}}{\eta}-\frac{\eta_i\eta_j}{\eta^2}+\left(\frac{\gamma''}{\gamma}- \frac{(\gamma')^2}{\gamma^2} \right)u_iu_j+\frac{\gamma'}{\gamma}u_{ij}.
\end{split}
\label{logPhi}
\end{align}
By combining \eqref{aij} and \eqref{logPhi}, there holds
\begin{align*}
\sum_{i,j=1}^na_{ij}(\nabla u)(\log\Phi)_{ij}=&\frac{1}{|\nabla w|^2\log|\nabla w|^2}\sum_{i,j=1}^na_{ij}(\nabla u)(|\nabla w|^2)_{ij}\\
&-(1+\log|\nabla w|^2)\sum_{i=1}^n\left\{\frac{1}{1+u_1^2}\left(\frac{\eta_1}{\eta}+\frac{\gamma'}{\gamma}u_1\right)^2+\sum_{i=2}^n\frac{\eta_i^2}{\eta^2}\right\}\\
&+\left\{\frac{1}{1+u_1^2}\left(\frac{\eta_{11}}{\eta}-\frac{\eta_1^2}{\eta^2}-\frac{u_1^2}{M^2\gamma^2}\right)+\sum_{i=2}^n\left(\frac{\eta_{ii}}{\eta}-\frac{\eta_i^2}{\eta^2}\right)\right\}\\
=:&I_1+I_2+I_3,
\end{align*}
then it is clear that $I_2$ is a negative term, we will use $I_1$ to bound $I_2$. We also note that $$|\nabla^2\eta|+\frac{|\nabla\eta|^2}{\eta}\leq C\ \ \ \hbox{in $B_2(y_0)$},$$ 
thus \begin{align}
I_3\geq-\(\frac{C}{\eta}+\frac{1}{M^2}\).&\label{I3}
\end{align}

A straightforward calculation yields that 
\begin{align}
(|\nabla w|^2)_{i}&=2\sum_{k=1}^nu_{ik}w_k,\label{gw}\\
(|\nabla w|^2)_{ij}&=2\sum_{k=1}^n(u_{ik}u_{jk}+u_{ijk}w_k),\nonumber
\end{align}
thus we obtain from \eqref{uij} and \eqref{aij} that
\begin{align}
\sum_{i,j=1}^na_{ij}(\nabla u)(|\nabla w|^2)_{ij}=&2\sum_{i,k=1}^na_{ii}(\nabla u)u_{ik}^2+2\sum_{i,j,k=1}^na_{ij}(\nabla u)u_{ijk}w_k\nonumber\\
=&\frac{2}{1+u_1^2}\sum_{k=1}^nu_{1k}^2+2\sum_{i=2}^nu_{1i}^2+2\sum_{i=2}^nu_{ii}^2\nonumber\\
&+2\sum_{k=1}^n\sum_{i,j=1}^na_{ij}(\nabla u)u_{ijk}w_k.\label{uijk}
\end{align}
In order to eliminate the third derivatives of $u$ in \eqref{uijk}, we differentiate the minimal surface equation \eqref{ndmse} and get $$\sum_{i,j=1}^na_{ij}(\nabla u)u_{ijk}+\sum_{i,j,l=1}^n\partial_{p_l}a_{ij}(\nabla u)u_{ij}u_{kl}=0,$$ thus we obtain from \eqref{pa11}-\eqref{paij} that 
\begin{align*}
\sum_{i,j=1}^na_{ij}(\nabla u)u_{ijk}=\frac{2u_1u_{11}u_{1k}}{(1+u_1^2)^2}+\frac{2u_1}{1+u_1^2}\sum_{j=2}^nu_{1j}u_{jk}\ \ \ \hbox{for $k=1,\ldots,n$}.\end{align*}
By a simple substition, we get  
$$(|\nabla w|^2\log|\nabla w|^2)I_1=\sum_{i,j=1}^na_{ij}(\nabla u)(|\nabla w|^2)_{ij}=:J_1+J_2,$$
where 
\begin{align*}
J_1&=\frac{(2+2u_1^2+4u_1w_1)u_{11}^2}{1+u_1^2}+\frac{4+2u_1^2+4u_1w_1}{1+u_1^2}\sum_{i=2}^nu_{1i}^2+2\sum_{i=2}^n\lambda_i^2,\\
J_2&=\frac{4u_1u_{11}}{(1+u_1^2)^2}\sum_{k=2}^nu_{1k}w_k+\frac{4u_1}{1+u_1^2}\sum_{i=2}^n\lambda_iu_{1i}w_i.
\end{align*}
We point out that $-|J_2|$ can be bounded by $J_1,$ to see this, we apply the Cauchy inequality to get\begin{align*}
\frac{4u_1}{(1+u_1^2)^2}\sum_{k=2}^n|u_{11}u_{1k}w_k|&\geq-\frac{4\tau u_1}{(1+u_1^2)^2}\sum_{k=2}^n|u_{11}u_{1k}|\\
&\geq-\frac{2\tau u_1}{(1+u_1^2)^2}\((n-1)u_{11}^2+\sum_{k=2}^nu_{1k}^2\),\\
\frac{4u_1}{1+u_1^2}\sum_{i=2}^n|\lambda_iu_{1i}w_i|&\geq-\frac{-4\tau u_1}{1+u_1^2}\sum_{i=2}^n|\lambda_iu_{1i}|\\
&\geq-\frac{2\tau u_1}{1+u_1^2}\(\sum_{i=2}^n\lambda_i^2+\sum_{i=2}^nu_{1i}^2\).
\end{align*}
Since $u_1\geq(10+n+\tau)^{10}$ and $$\frac{w_1}{u_1}\geq1-\frac{\tau}{u_1}\geq1-\frac{\tau}{(10+n+\tau)^{10}},$$ 
we have $$\frac{2+2u_1^2+4u_1w_1}{1+u_1^2}\geq\frac{11}{2},\ \hbox{and}\ \frac{2(n-1)\tau u_1}{(1+u_1^2)^2}+\frac{2\tau u_1}{1+u_1^2}\leq\frac{1}{5},$$ which imply
\begin{align}\(|\nabla w|^2\log|\nabla w|^2\)I_1\geq J_1-|J_2|\geq\frac{49}{10}\sum_{i=2}^nu_{1i}^2+\frac{49}{10}u_{11}^2+\frac{9}{5}\sum_{i=2}^n\lambda_i^2.&\label{I1}
\end{align}
Now we start to deal with the $\sum\limits_{i=2}^nu_{1i}^2$ and $u_{11}^2.$ By taking $i\geq2$ in \eqref{Phi}, we obtain from \eqref{uij} and \eqref{gw} that 
\begin{align}
u_{1i}&=-\frac{\lambda_iw_i}{w_1}-\frac{\eta_i|\nabla w|^2\log|\nabla w|^2}{2\eta w_1}.\label{u1i}
\end{align}
By taking $i=1$ in \eqref{Phi} and using \eqref{u1i}, we have \begin{align*}
u_{11}&=-\sum_{j=2}^n\frac{u_{1j}w_j}{w_1}-\frac{1}{2w_1}\left(\frac{\eta_1}{\eta}+\frac{\gamma'u_1}{\gamma}\right)|\nabla w|^2\log|\nabla w|^2\nonumber\\
&=\sum_{j=2}^n\frac{\lambda_jw_j^2}{w_1^2}+\frac{1}{2w_1}\left(\sum_{j=2}^n\frac{\eta_jw_j}{\eta w_1}-\frac{\eta_1}{\eta}-\frac{\gamma'u_1}{\gamma}\right)|\nabla w|^2\log|\nabla w|^2.
\end{align*}
Hence, for $\epsilon>w_1^{-4}$ to be determined, we have 
\begin{align*}
\sum_{i=2}^nu_{1i}^2&=\sum_{i=2}^n\left(\frac{\lambda_iw_i}{w_1}+\frac{\eta_i|\nabla w|^2\log|\nabla w|^2}{2\eta w_1}\right)^2\\
&\geq\frac{1}{4w_1^2}\sum_{i=2}^n\frac{\eta_i^2|\nabla w|^4\log^2|\nabla w|^2}{\eta^2}+\sum_{i=2}^n\frac{\lambda_i\eta_iw_i|\nabla w|^2\log|\nabla w|^2}{\eta w_1^2}\\
&\geq-\epsilon\tau^2\sum_{i=2}^n\lambda_i^2+\(\frac{1}{4w_1^2}-\frac{1}{4\epsilon w_1^4}\)\sum_{i=2}^n\frac{\eta_i^2|\nabla w|^4\log^2|\nabla w|^2}{\eta^2},\\
u_{11}^2\geq&\(\sum_{j=2}^n\frac{\lambda_jw_j^2}{w_1^3}\)\cdot\left(\sum_{j=2}^n\frac{\eta_jw_j}{\eta w_1}-\frac{\eta_1}{\eta}-\frac{\gamma'u_1}{\gamma}\right)|\nabla w|^2\log|\nabla w|^2\\
&+\frac{1}{4w_1^2}\left(\sum_{j=2}^n\frac{\eta_jw_j}{\eta w_1}-\frac{\eta_1}{\eta}-\frac{\gamma'u_1}{\gamma}\right)^2|\nabla w|^4\log^2|\nabla w|^2\\
\geq&-\epsilon\tau^4\sum_{j=2}^n\lambda_j^2+\(\frac{1}{4w_1^2}-\frac{1}{4\epsilon w_1^6}\)\left(\sum_{j=2}^n\frac{\eta_jw_j}{\eta w_1}-\frac{\eta_1}{\eta}-\frac{\gamma'u_1}{\gamma}\right)^2|\nabla w|^4\log^2|\nabla w|^2\\
\geq&-\epsilon\tau^4\sum_{j=2}^n\lambda_j^2+\(\frac{1}{4w_1^2}-\frac{1}{4\epsilon w_1^6}\)|\nabla w|^4\log^2|\nabla w|^2\\
&\cdot\(-\epsilon\tau^2\sum_{j=2}^n\frac{\eta_j^2}{\eta^2}+\(1-\frac{1}{\epsilon w_1^2}\)\(\frac{\eta_1}{\eta}+\frac{\gamma'u_1}{\gamma}\)^2\).
\end{align*}
Taking $\epsilon=4w_1^{-2}>2w_1^{-4},$ then $100\epsilon<(1+\tau^2+\tau^4)^{-1}.$ Thus we obtain from \eqref{I1} that 
\begin{align*}
I_1\geq&\frac{11}{8w_1^2}\(1-\epsilon\tau^2-\frac{1}{\epsilon w_1^2}\)\sum_{i=2}^n\frac{\eta_i^2|\nabla w|^2\log|\nabla w|^2}{\eta^2}\\
&+\frac{11}{8w_1^2}\(1-\frac{1}{\epsilon w_1^6}\)\(1-\frac{1}{\epsilon w_1^2}\)\(\frac{\eta_1}{\eta}+\frac{\gamma'u_1}{\gamma}\)^2|\nabla w|^2\log|\nabla w|^2\\
\geq&\frac{407\log|\nabla w|^2}{400}\sum_{i=2}^n\frac{\eta_i^2}{\eta^2}+\frac{99}{128}\(\frac{\eta_1}{\eta}+\frac{\gamma'u_1}{\gamma}\)^2\log|\nabla w|^2.
\end{align*}
Therefore,
\begin{equation}
\begin{split}
I_1+I_2&\geq\left(\frac{99\log|\nabla w|^2}{128}-\frac{1+\log|\nabla w|^2}{1+u_1^2}\right)\(\frac{\eta_1}{\eta}+\frac{\gamma'u_1}{\gamma}\)^2\\
&\geq\frac{\log|\nabla w|^2}{2}\(\frac{\eta_1}{\eta}+\frac{\gamma'u_1}{\gamma}\)^2.
\end{split}
\label{I1I2}
\end{equation}
We note that $$\sum_{i,j=1}^na_{ij}(\nabla u)(\log\Phi)_{ij}=I_1+I_2+I_3\leq0\ \ \ \hbox{at $y_1,$}$$ hence 
\eqref{I1I2} and \eqref{I3} combined give that $$
\frac{\log|\nabla w|^2}{2}\(\frac{\eta_1}{\eta}+\frac{\gamma'u_1}{\gamma}\)^2\leq\frac{C}{\eta}+\frac{1}{M^2}.$$
To end this proof, two cases will be treated in what follows. First, if
$$
\left|\frac{\eta_1}{\eta u_1}\right|\leq \frac{\gamma'}{2\gamma},
$$
then
$$
\log |\nabla w|^2\leq C\frac{\gamma^2}{(\gamma')^2}\left(\frac{1}{\eta}+\frac{1}{M^2}\right).
$$
It follows
$$
\log|\nabla w(0)|^2\leq C\eta\log|\nabla w|^2\leq CM^2
$$
Second, if
$$
\left|\frac{\eta_1}{\eta u_1}\right|> \frac{\gamma'}{2\gamma},
$$
then
\begin{align*}
\log|\nabla w(0)|&\leq C\Phi(y_1)\leq C\eta u_1\leq\frac{2\gamma|\eta_1|}{\gamma'}\leq CM.
\end{align*}
To sum up, we have
$$
|\nabla w(0)|\leq\exp\left\{C(M^2+M)\right\}.$$
This completes the proof.
\end{proof}

\section{Proof of Main Theorems}
\begin{proof}[Proof of Theorem \ref{ln}]
For any $R>0$ and $x\in\overline{\R_+^n},$ we set $$u_R(x)=\frac{1}{R}u(Rx)\ \hbox{and}\ v_R(x)={\partial_{x_n} u_R}(x),$$ then it follows from Lemma \ref{ggn} that $|\nabla u_R|\in L^\infty(\R_+^n).$  A basic calculation shows that $v_R\in  C(\overline{\R_+^n})\cap C^\infty(\R^n_+)$ satisfies following uniform elliptic equation with constant Dirichlet boundary value:
\begin{equation*}
\displaystyle\sum_{i, j=1}^n{\partial_{x_i}}\(b_{R, ij}{\partial_{x_j}v_R}\)=0\ \hbox{in $\R_+^n$,}\ \ \ 
v_R=\tau \ \hbox{on $\partial\R_+^n$,}
\end{equation*}
where $$b_{R,ij}={1\over\sqrt{1+|\nabla u_R|^2}}\(\delta_{ij}-{{\partial_{x_i}u_R}{\partial_{x_j}u_R}\over 1+|\nabla u_R|^2}\).$$
From \cite[Theorem 8.27, Theorem 8.29]{GilTr}, there exists $\alpha\in(0,1)$ such that
\begin{equation*}
\|v_R\|_{C^\alpha(\overline{B_1^+})}\leq C\|v_R\|_{L^2(B_2^+)}\leq C.
\end{equation*}
Therefore,
$$|v_R(x)-v_R(0)|\leq C|x|^\alpha \,\,\, \hbox{for any $x\in\overline{B_1^+} $},$$
which yields 
\begin{equation}\label{hol}
|{\partial_{x_n}u}(y)-{\partial_{x_n}u}(0)|\leq C{|y|^\alpha\over R^\alpha} \,\,\, \hbox{for any $y\in\overline{B_R^+} $}.
\end{equation}
Fixing $y$ and letting $R\rightarrow \infty$ in \eqref{hol}, we know ${\partial_{x_n}u}$ is a constant in $\overline{\R_+^n}.$ Hence $u(x', x_n)=\tilde{u}(x')+\tau x_n$, where $\tilde{u}$ is a smooth function on $\R^{n-1}$. Further $\tilde{u}$ is an entire solution of the minimal surface equation in $\R^{n-1}$, which means $\tilde{u}$ is an affine function in  $\R^{n-1}$ by Liouville theorem for entire minimal graph. The proof is finished.
\end{proof}

\begin{proof}[Proof of Theorem \ref{ld}]
Without loss of generality, we may assume that there exists $\beta=(\beta_1,\ldots,\beta_n)\in\R^n$ with $\beta_n=0$ such that $$l(x)=\langle\beta,x\rangle\ \ \ \hbox{for any $x\in\R_+^n.$}$$
Set $\bar u=u-l$, then $|\nabla\bar u|\in L^\infty(\R_+^n)$ and $\bar u\in C^2(\overline{\R_+^n})$ satisfies 
\begin{equation*}
\sum_{i,j=1}^n\bar a_{ij}\partial_{x_ix_j}\bar u=0\ \hbox{in $\R_+^n$,}\ \ \ \bar u=0\ \hbox{on $\partial\R_+^n$,}
\end{equation*}
where $$\bar a_{ij}=\delta_{ij}-{(\partial_{x_i}\bar u+\beta_i)(\partial_{x_j}\bar u+\beta_j)\over{1+|\nabla\bar u+\beta|^2}}.$$
By the H\"{o}lder estimate for the normal derivatives of solutions on the boundary due to Krylov \cite{Kr} (see also \cite[Theorem 1.2.16]{Ha}), we get $$\partial_{x_n}u(x',0)=\partial_{x_n}\bar u(x',0)\equiv c\ \ \ \hbox{on $\partial\R_+^n,$}$$ where $c\in\R$ is a constant. 
Then $u$ is an affine function by Theorem \ref{ln}.
\end{proof}

\begin{appendix}
\section{Bounded Convex Domain with $C^3$-boundary}\label{A}

In the following, we construct a bounded convex $C^3$-type domain, whose boundary contains $\Sigma_1$. 

For $t\in[0,1],$ we set $$\psi(t)={64\over35}t^{1/2}-2t^2+{8\over5}t^3-{3\over7}t^4.$$ Then, $\psi:[0,1]\rightarrow[0,1]$ is continuous and concave. Straightforward calculations show that
\begin{align*}
&\psi>0,\, \psi'>0,\, \psi''<0\,\, \hbox{and}\,\, \psi'''>0\, \hbox{in $(0,1),$}\\
&\lim_{t\to0+}\psi'(t)=+\infty,\, \lim_{t\to1-}\psi'(t)=\psi'(1)=0,\\
&\lim_{t\to0+}\psi''(t)=-\infty,\, \lim_{t\to1-}\psi''(t)=\psi''(1)=0,\\
&\lim_{t\to0+}\psi'''(t)=+\infty,\, \lim_{t\to1-}\psi'''(t)=\psi'''(1)=0.\\
\end{align*}
For every $h\in[0,2],$ we define
\begin{equation*}
\tilde\psi(h)=
\begin{cases}
\psi(h) & \text{if $h\in[0,1],$}\\
\psi(2-h) & \text{if $h\in[1,2].$}\\
\end{cases}
\end{equation*}
and $$\Omega=\left\{(x',x_n)\in\R_+^n;\, |x'|<2+\tilde\psi(x_n),\, 0<x_n<2\right\},$$
we then claim that:
\medskip

\noindent{\bf Claim.} $\Omega\subset\R^n$ is a convex bounded domain with $C^3$-boundary.

\begin{proof} It is easy to see that $\Omega\subset B_5^+$ is a $C^3$-type domain. Let $(x',x_n)$ and $(y',y_n)$ be two points in $\Omega.$ We note that $\tilde\psi:[0,2]\to[0,1]$ is a concave function. For any $t\in[0,1]$, we obtain from the concavity of $\tilde\psi$ that
\begin{align*}
|tx'+(1-t)y'|&\leq t|x'|+(1-t)|y'|\\
&<t(2+\tilde\psi(x_n))+(1-t)(2+\tilde\psi(y_n))\\
&\leq2+t\tilde\psi(x_n)+(1-t)\tilde\psi(y_n)\\
&\leq2+\tilde\psi(tx_n+(1-t)y_n).
\end{align*}
Hence, we have $$(tx'+(1-t)y',tx_n+(1-t)y_n)\in\Omega,$$
which implies that $\Omega\subset\R^n$ is convex.
\end{proof}

\medskip
\section{Global Regularity for Solutions}\label{B}
In this section, we show that solutions of \eqref{mse}-\eqref{diri} are smooth up to the boundary $\partial\R_+^n,$ which is an immediate corollary of following general theorem.

\begin{theorem}
Let $\Omega\subset\R^n$ be a bounded domain with $C^3$-boundary satisfying $H_{\partial\Omega}\geq0$ on $\partial\Omega,$ where $H_{\partial\Omega}$ is the mean curvature of $\partial\Omega$ corresponding to the inner unit normal vector to $\partial\Omega.$ Suppose $T$ is a smooth portion of $\partial\Omega.$ For $\gamma\in(0,1)$ and $\varphi\in C(\partial\Omega)\cap C^{2,\gamma}(T),$ and $u\in C^2(\Omega)\cap C(\overline\Omega)$ solves the minimal surface equation \eqref{ndmse} in $\Omega$ with the Dirichlet boundary condition $u=\varphi$ on $\partial\Omega.$ Then,\ $u\in C^{2,\gamma}(\Omega\cup T).$ Furthermore, if $T$ is a smooth portion of $\partial\Omega$ and $\varphi\in C(\partial\Omega)\cap C^\infty(T),$ then $u\in C^\infty(\Omega\cup T).$
\end{theorem}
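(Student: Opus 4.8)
The plan is to reduce the global regularity statement to standard results by a two-step approximation-plus-Schauder argument. First I would address the interior-plus-flat-portion $C^{2,\gamma}$ regularity, then bootstrap to $C^\infty$ when $\varphi$ is smooth on $T$.

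\textbf{Step 1: Approximation and a priori bounds.} Since $\partial\Omega$ is only $C^3$ and $\varphi$ is merely continuous on $\partial\Omega$ (smooth only on $T$), I cannot quote the classical solvability theorem directly for the given boundary data, so I would first produce $u$ as a limit of nicer solutions. Take a sequence $\varphi_k \in C^\infty(\partial\Omega)$ with $\varphi_k \to \varphi$ uniformly on $\partial\Omega$ and $\varphi_k = \varphi$ on a slightly shrunk portion $T' \Subset T$ (or at least $\varphi_k \to \varphi$ in $C^{2,\gamma}_{\mathrm{loc}}(T)$); the hypothesis $H_{\partial\Omega}\ge 0$ is exactly the Jenkins--Serrin condition that guarantees, via \cite{JeSe} or \cite[Theorem 16.10]{GilTr}, a solution $u_k \in C^2(\Omega)\cap C^{2,\gamma}(\overline\Omega)$ of \eqref{ndmse} with $u_k = \varphi_k$ on $\partial\Omega$. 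The maximum principle gives $\|u_k\|_{L^\infty(\Omega)} \le \|\varphi_k\|_{L^\infty(\partial\Omega)} \le C$, and comparison of $u_k$ with $u_j$ shows $\|u_k - u_j\|_{L^\infty(\Omega)} \le \|\varphi_k - \varphi_j\|_{L^\infty(\partial\Omega)}$, so $u_k \to v$ uniformly on $\overline\Omega$ for some $v \in C(\overline\Omega)$ with $v = \varphi$ on $\partial\Omega$. Interior gradient estimates for the minimal surface equation (Bombieri--De Giorgi--Miranda, or \cite[Chapter 16]{GilTr}) plus interior Schauder estimates show $v \in C^2(\Omega)$ and $v$ solves \eqref{ndmse} in $\Omega$; by the comparison/uniqueness principle for the minimal surface equation with continuous boundary data (again the maximum principle applied to $u - v$), $v = u$. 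So it suffices to prove the regularity assertions for each $u_k$ uniformly near $T'$ and pass to the limit.

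\textbf{Step 2: Boundary $C^{1,\alpha}$ and $C^{2,\gamma}$ estimates near $T$.} Near a point of $T'$, flatten the boundary by a $C^3$ diffeomorphism (so the equation stays in divergence form with $C^2$ coefficients, losing no derivatives since $T$ is actually smooth there — or work directly if $T$ is the flat piece $\Sigma_1$ as in the application, where no flattening is needed). The boundary gradient estimate for the minimal surface equation with $C^2$-boundary data (the barrier construction in \cite[Chapter 14]{GilTr}, using $H_{\partial\Omega}\ge 0$) gives a bound on $|\nabla u_k|$ on $T'$ depending only on $\Omega$, $\|\varphi_k\|_{C^2}$; combined with the interior gradient estimate and the global maximum principle this yields $\|\nabla u_k\|_{L^\infty(\Omega\cap N)} \le C$ on a neighborhood $N$ of $T'$, uniformly in $k$. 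With a uniform gradient bound the equation \eqref{ndmse} becomes uniformly elliptic with coefficients $a_{ij}(\nabla u_k)$, and De Giorgi--Nash--Moser together with the boundary Hölder estimate give $u_k \in C^{1,\alpha}$ up to $T'$ with uniform bounds. Then $\partial_{x_j} u_k$ solves a linear uniformly elliptic divergence-form equation with Hölder coefficients and Hölder (indeed smooth) boundary data on $T'$, so boundary Schauder estimates (\cite[Theorem 6.29]{GilTr} or Krylov \cite{Kr}, as already invoked in the paper) give $u_k \in C^{2,\gamma}(\Omega \cup T')$ with bounds depending only on the data. Passing $k\to\infty$ and letting $T' \uparrow T$ gives $u \in C^{2,\gamma}(\Omega\cup T)$.

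\textbf{Step 3: Bootstrap to $C^\infty$.} If $\varphi \in C^\infty(T)$, differentiate \eqref{ndmse}: each $\partial_{x_j} u$ satisfies a linear uniformly elliptic equation whose coefficients now have the regularity of $\nabla u$, i.e. $C^{1,\gamma}$ after Step 2, and whose boundary data on $T$ is $C^\infty$; Schauder gives $\partial_{x_j} u \in C^{2,\gamma}(\Omega\cup T)$, hence $u \in C^{3,\gamma}$, and iterating this upgrades $u$ to $C^{m,\gamma}(\Omega\cup T)$ for every $m$, so $u \in C^\infty(\Omega\cup T)$. The one genuine obstacle is Step 2: securing a \emph{uniform} boundary gradient bound for the approximating solutions $u_k$ on the interior portion $T'$ — the barrier construction needs the mean convexity $H_{\partial\Omega}\ge 0$ and $C^2$ control of the boundary data, and one must be careful that the constant depends only on $\mathrm{dist}(T', \partial T)$ and not on the (possibly bad) behavior of $\varphi_k$ on $\partial\Omega \setminus T$. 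Everything after a uniform gradient bound is routine linear elliptic theory.
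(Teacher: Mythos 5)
Your overall route is the same as the paper's (approximate the boundary data, solve via Jenkins--Serrin using $H_{\partial\Omega}\ge 0$, identify the limit with $u$ by the comparison principle, establish uniform local gradient bounds near $T$, then Krylov/Schauder and bootstrap), but the two steps you treat as routine are exactly where the paper has to do real work, and as written they do not go through. First, the uniform boundary gradient bound on $T'$: you correctly flag that the constant must not depend on the behavior of $\varphi_k$ off $T$, but you leave this unresolved, and the barrier construction of \cite[Chapter 14]{GilTr} does not localize for free --- a barrier over $\Omega\cap B_r(x_0)$ must dominate $u_k$ on the inner portion $\partial B_r(x_0)\cap\Omega$, including points arbitrarily close to $\partial\Omega$, where neither the local $C^2$ data nor the distance-function barrier gives control. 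The paper's resolution is the comparison-solution trick of Lemma \ref{bgd}: replace the boundary data by a cutoff interpolation which equals $\phi_k$ near $x_0$ and equals the constant bound $M_0\ge\sup|\phi_k|$ away from $x_0$; this modified data is uniformly bounded in $C^{2}$ on all of $\partial\Omega$, so \cite{JeSe} (this is where $H_{\partial\Omega}\ge0$ enters, for solvability and for a uniform global gradient bound of the comparison solution $v_k$) gives $v_k$ with $u_k\le v_k$ in $\Omega$ and $u_k=v_k$ on $T\cap B_{d/4}(x_0)$, whence a one-sided bound on the normal derivative of $u_k$ there, and similarly from below. Your proposal contains no substitute mechanism for this localization.

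Second, your claim that the bound on $T'$ ``combined with the interior gradient estimate and the global maximum principle'' yields $\|\nabla u_k\|_{L^\infty(\Omega\cap N)}\le C$ fails as stated: any gradient maximum principle (for $\sqrt{1+|\nabla u_k|^2}$ on the graph, or for directional derivatives) applied on $\Omega\cap N$ sees the inner boundary $\partial N\cap\Omega$, and near the corner where $\partial N$ meets $\partial\Omega$ neither the interior estimate (which degenerates like $\exp(C/\mathrm{dist}(x,\partial\Omega))$) nor the boundary estimate applies; applied on all of $\Omega$ it sees $\partial\Omega\setminus T$, where $\nabla u_k$ is not uniformly bounded. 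The paper instead runs a localized Bernstein argument (Lemma \ref{ggd}, following \cite{Wa}) with a cutoff supported in $B_{d/4}(x_0)$, so that the auxiliary function can only attain its maximum in the interior or on $T\cap B_{d/4}(x_0)$, where the boundary gradient bound applies. A smaller point: obtaining $C^{1,\alpha}$ regularity up to $T'$ is not just ``De Giorgi--Nash--Moser plus a boundary H\"older estimate''; one needs Krylov's H\"older estimate for the normal derivative \cite{Kr} together with the interpolation argument of \cite[Theorem 2.5.1]{Ha} to convert boundary and interior H\"older bounds on $\nabla u_k$ into a H\"older bound on a full half-neighborhood --- since you do cite Krylov, this is a matter of detail rather than substance. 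Once these gradient estimates are secured, your Schauder bootstrap in Steps 2--3 matches the paper's conclusion of the argument.
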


\begin{proof}
For $x_0\in T,$ put $$d={\rm dist}(x_0,\partial\Omega\setminus T)>0.$$
We suffice to show $\nabla u$ is well defined on $\overline{B_{d/16}(x_0)}\cap\overline\Omega,$ and $\nabla u\in C^{\alpha}(\overline{B_{d/16}(x_0)}\cap\overline\Omega)$ for some $\alpha\in(0,\gamma),$ which will implies $a_{ij}(\nabla u)\in C^\alpha(\overline{B_{d/16}(x_0)}\cap\overline\Omega).$ Then, by $\varphi\in C^{2,\gamma}(T)$, extension and \cite[Theorem 1.3.7]{Ha}, we know $u\in C^{2,\alpha}(\Omega\cup T),$ which implies $a_{ij}(\nabla u)\in C^{\gamma}(\Omega\cup T).$ Again we get $u\in C^{2,\gamma}(\Omega\cup T).$ As to the case that $T$ is smooth and $\varphi\in C^\infty(T),$ based on the \cite[Theorem 1.3.10]{Ha}, it will be finished by a bootstrap argument.

Let $\{\phi_k\}_{k=1}^\infty\subset C^{2,1/2}(\overline\Omega)$ be a sequence satisfying 
\begin{align*}
&\phi_k\to\varphi\ \ \ \hbox{in $C(\partial\Omega),$}\\
&\phi_k\to u\ \ \ \hbox{in $C^{2,1/2}(\overline{B_{d/2}(x_0)}\cap\partial\Omega).$}
\end{align*}
We assume $$|\phi_k|_{L^\infty(\partial\Omega)}\leq M_0,\ |\phi_k|_{C^{2,1/2}(\overline{B_{d/2}(x_0)}\cap\partial\Omega)}\leq M_0$$ for some large constant $M_0>0.$ With each boundary value $\phi_k,$ we solve the minimal surface equation in $\Omega$ to obtain a solution $u_k\in C^{2,1/2}(\overline\Omega)$. It follows from comparison principle that $u_k\to u$ in $C(\overline\Omega)$ and
\begin{equation*}
|u_k|_{L^\infty(\Omega)}\leq M_0\ \ \ \hbox{for all $k\in\N_+.$}
\end{equation*}

{\sl Step 1.} We estimate the $L^\infty$-norm of $|\nabla u_k|$ near $T$. To this end, we choose a cut-off function $\rho\in C_0^\infty(B_{d/2}(x_0))$ such that 
\begin{align*}
&\rho\equiv1\ \ \ \hbox{in $B_{d/4}(x_0),$}\\
&0\leq\rho\leq1\ \ \ \hbox{in $B_{d/2}(x_0),$}\\
&|\nabla\rho|\leq8/d\ \ \ \hbox{in $B_{d/2}(x_0).$}
\end{align*}
For each $k\in\N_+,$ we set $\psi_k=(1-\rho)\phi_k+\rho M_0,$ then $\phi_k\leq\psi_k$ on $\partial\Omega.$ As in the proof of Lemma \ref{bgd}, we construct the comparison function $v_k$ which is a solution of the minimal surface equation in $\Omega$ with the boundary value $\psi_k$ on $\partial\Omega.$ Hence, we obtain from Lemma \ref{bgd} that $$\partial_{x_n}u_k(x_0)\leq\partial_{x_n}v_k(x_0)\leq C(d,M_0),$$ thus we get $$|\nabla u_k|_{L^\infty(T\cap B_{d/4}(x_0))}\leq C(d,M_0),$$ where $C(d,M_0)$ are positive quantities depending only on $d$ and $M_0.$ Then, we can proceed similarly as in the proof of Lemma \ref{ggd} and apply the Bernstein technique to obtain $$|\nabla u_k|_{L^\infty(\Omega\cap B_{d/8}(x_0))}\leq C(d,M_0).$$

{\sl Step 2.}  We provide an upper bound for $C^{1,\alpha}$-norms of $\{u_k\}$ near $x_0$. Since the proof is almost similar to the proof of \cite[Theorem 2.5.1]{Ha}, we sketch the procedure in the following.

{\sl Step 2.1.} Flattening the boundary $\partial\Omega$ near $x_0$, then applying \cite[Theorem 1.2.16]{Ha} to $u_k-\phi_k$ to obtain the boundary H\"{o}lder estimate for normal derivative of $u_k-\phi_k.$ Notice that the estimate is done near $x_0$ and $$|\phi_k|_{C^2(\overline\Omega\cap\overline{B_{d/8}(x_0)})}\leq C(M_0),$$ it follows from the first step of proof of \cite[Theorem 2.5.1]{Ha} that 
\begin{equation}\label{B1}
[\nabla u]_{C^\beta(\overline T\cap\overline{B_{d/8}(x_0)})}\leq C(n,d,M_0,\Omega)
\end{equation}
for some $\beta=\beta(n,d,M_0,\Omega)\in(0,1).$

{\sl Step 2.2.} Using \eqref{B1} to prove that $$|\nabla u_k(x)-\nabla u_k(y)|\leq C(n,d,\beta,M_0,\Omega)|x-y|^{\beta/(1+\beta)}$$ for all $x\in\overline\Omega\cap\overline{B_{d/8}(x_0)}$ and $y\in \overline T\cap\overline{B_{d/16}(x_0)}.$

{\sl Step 2.3.} Based on the Step 2.2, directly following the Step 3 of the proof of \cite[Theorem 2.5.1]{Ha} to obtain $$[\nabla u_k]_{C^{\beta'}(\overline\Omega\cap\overline{B_{d/16}(x_0)})}\leq C(n,d,\beta,M_0,\Omega)$$ for some $\beta'\in(0,\beta).$

By the Arzel\`a-Ascoli theorem, we deduce that $\nabla u$ is well defined on $\overline\Omega\cap\overline{B_{d/16}(x_0)},$ and $\nabla u\in C^{\alpha}(\overline\Omega\cap\overline{B_{d/16}(x_0)})$ for any $\alpha\in(0,\beta').$
\end{proof}

\end{appendix}
\bigskip
\noindent
{\bf Acknowledgments.}
G. Jiang and Z. Wang would like to express their gratitude to Professor Qing Han for constant encouragements. J. Zhu is partially supported by NSFC grants No. 11671015 and 11731001. Authors would also like to thank Mr. Zhisu Li and Mr. Yongjie Liu for helpful discussions.

\bigskip

\end{document}